\newtheorem{theorem}{Theorem}[section]
\newtheorem{lemma}[theorem]{Lemma}
\newtheorem{definition}[theorem]{Definition}
\newtheorem{example}[theorem]{Example}
\newtheorem{remark}[theorem]{Remark}
\newtheorem{main problem}[theorem]{Main problem}
\newtheorem{corollary}[theorem]{Corollary}
\definecolor{brandblue}{rgb}{0.34, 0.7, 1}
\newtcolorbox{mainbox}[1]{
  colframe=black!20!white, 
  base={#1}
}
\begin{document}

\begin{frontmatter}



\title{An optimal convergent Collatz algorithm}


\author[inst1]{J.C. Riano-Rojas}

\affiliation[inst1]{organization={Departamento de Matemáticas y Estadística, Universidad Nacional de Colombia},
            addressline={jcrianoro@unal.edu.co}, 
            city={Manizales},
            state={Caldas},
            country={Colombia },
            orcidID = {orcid:0000-0002-5719-2854}            
            }

\begin{abstract}
In this research, an optimal algorithm for the Collatz conjecture is presented. Properties such as the convergence of the algorithm and an equation that relates the algorithm to the classical Collatz conjecture are obtained. It is validated that the proposed theory is correct with several examples; as a sector of mathematicians believes that the Collatz conjecture fails in some power of $3$. The algorithm was applied to the first $600$ powers of $3$, where the convergence of the proposed algorithm was verified.

\end{abstract}



\begin{keyword}
Collatz's conjecture;  Diophantine equation; odd.
\MSC 11Dxx; 11Zxx
\end{keyword}

\end{frontmatter}



\section{Introduction}

The Collatz conjecture is still an interesting problem in mathematics. Several hundred articles have been devoted to it, as can be observed in Lagarias \citep{Lag1},\citep{Lag2},\citep{Lag3}, and \citep{Lag4}; where there is a state of the art on the subject up to $2009$, mentioning the different interesting properties of the Collatz function; as well as various generalizations; or relationships with other types of problems not only in the mathematical, but also in the computational field, and how some of them have proven the undecidability of the conjecture. That is why this conjecture becomes a challenge that forces mathematicians to try to decipher it. In many of the articles mentioned in the Lagarias review, there is no explicit proposal for an algorithm like the one formulated in this research, much less the proof of its convergence.

The charm of the conjecture has not escaped the great mathematician Tao in \citep{Tao}. He showed that, for any function $f : \mathbb{N} + 1 \longrightarrow \mathbb{R}$ with $\lim_{N \longrightarrow \infty} f(N) = +\infty $, one has $Col_{min(N)} \leq f(N)$, for almost all $N \in \mathbb{N}+1 $(in the sense of logarithmic density). The proof proceeds by establishing a stabilization property for a certain first passage random variable; associated with the Collatz iteration (or more precisely, the closely related Syracuse iteration); which, in turn, is derived from an estimate of the characteristic function of a certain skew random walk on a 3-adic cyclic group $\mathbb{Z}/3^{n}\mathbb{Z}$ at high frequencies. This estimate is achieved by studying how a certain two-dimensional renewal process interacts with a union of triangles associated with a given frequency. Although his advances are important, they are not related to the focus of this research.

Barina presents at \citep{Barina}, a new algorithmic approach for computational convergence verification of the Collatz problem. The main contribution of the paper is the replacement of huge precomputed tables containing $O(2^N)$ entries with small lookup tables comprising only $O(N)$ elements. Our single-threaded CPU implementation can check $4.2 \times 10^9$ 128-bit numbers per second on a computer with an Intel Xeon Gold $5218$ CPU but does not prove the conjecture.

The most recent research; on the conjecture was carried out by Heule et all at \citep{Heule}. They explored the Collatz conjecture and its variants through the lens of termination of
string rewriting. They built a rewriting system that simulates the iterated application of the Collatz function on strings, corresponding to mixed binary–ternary representations of positive integers. They showed that; the termination of this rewriting system is equivalent to the Collatz conjecture. Furthermore, they proved that a previously studied rewrite system; that simulates the Collatz function using unary representations, does not admit termination proofs by natural matrix interpretations, even when it is used in conjunction with dependency pairs. To show the feasibility of this approach in proving mathematically interesting claims, they also implemented a minimal termination prover using natural/arctic matrix interpretations and they found automated proofs of nontrivial weakening of the Collatz conjecture. Although they fail to prove the Collatz conjecture, they believe that the ideas here represent an interesting new approach. However, it is different from the algorithm proposed here.

The proposed algorithm\ref{alg1} is shown in section \ref{secc2} of this document and its operation is verified with example \ref{exam1}. The convergence of the algorithm is tested in section\ref{secc3}, and two results are included that can be useful to determine the undecidability of the Collatz conjecture. The article ends with  section\ref{secc4} of conclusions.

\section{Optimal Collatz algorithm}\label{secc2}

It is introduced by recalling notations, definitions, and examples necessary to understand the algorithm proposed in this document; which are found in the literature; but, to unify concepts, they are replicated here.

\subsection{Collatz Function}

The original Collatz function was case-defined as follows.
 
\begin{definition}\label{def1}
Let $n \in \mathbb{N}^{+}$ be arbitrary, construct
\begin{equation}
C(n) = \left\{
\begin{array}
[c]{l}%
\frac{n}{2}, \text{ if } n\equiv 0 \text{ (mod 2)}\\
3n+1, \text{ if } n \equiv 1  \text{ (mod 2)} 
\end{array}
\right. \label{ecua1}%
\end{equation}

where, $C$ is called the Collatz function. $\mathbb{N}^{+} = \mathbb{N} - {\{0\}}$; and $\equiv  \text{ (mod 2)}$ is congruence modulo 2. 

Let $n \in \mathbb{N}^{+}$, and let $k \in \mathbb{N}$ be denoted by
$C^{k}(n) = \underbrace{C \circ \cdots \circ C}_{k \text{-times}}(n)$, for a fixed $n$. The set of images of compositions $C^{k}$ is denoted by
\begin{equation}
\mathbb{C}( n) = \lbrace m \in \mathbb{N}^{+}: (\exists k \in \mathbb{N})(m = C^{k}(n))\rbrace.\label{ecua2}%
\end{equation}

\end{definition}

These concepts are shown in the following example\ref{exam1}.

\begin{example}\label{exam1}
Considering $n = 106$, and $k=10$, in table \ref{tabla1}, it can be seen that, $C^{10}(106)=4$ is in oval column 11 since it starts from column 0. \\

\begin{table}[ht]
\centering
    \begin{tabular}
    {|>{\columncolor{gray!30}}p{0.65cm} |p{0.365cm} |p{0.365cm} |p{0.365cm} |p{0.365cm} |p{0.365cm} |p{0.365cm} |p{0.365cm} |p{0.365cm} |p{0.365cm} |p{0.365cm} |p{0.365cm} |p{0.365cm} |p{0.365cm} |p{0.365cm} |p{0.365cm} |p{0.365cm} |}
    \rowcolor{gray!30}
    \hline
    \textbf{k} & \multicolumn{1}{l|}{\textbf{0}} & \textbf{1} &  \textbf{2} & \textbf{3} & 
    \textbf{4} & \textbf{5} & \textbf{6} & \textbf{7} & \textbf{8} & \textbf{9} & \textbf{10} & \textbf{11} & \textbf{12} & \textbf{13} & \textbf{14} & \textbf{15}   \\ \hline
\footnotesize{$C^{k}(n)$} & $106$ & $53$ & $160$ & $80$ & $40$ & $20$ & $10$ & $5$ & $16$ & $8$ & \cellcolor{gray!10}{$\Ovalbox{\textbf{4}}$} & $2$ & $1$ & $4$ & $2$ & $1$ \\ \hline
\end{tabular}
\caption{Collatz functions $C^{k}$ to $n = 106$}\label{tabla1}
\end{table}

Therefore, $\mathbb{C}(36)=\lbrace 106,53,160,80,40,20,10,5,16,8,4,2,1 \rbrace$, then $\vert\mathbb{C}(36)\vert = 12$ is finite.
\end{example}

Based on the above notion, Collatz formulated main problem is the conjecture \ref{main:prob1} below; which is still considered an open problem in Mathematics.

\begin{main problem}[Collatz conjecture]\label{main:prob1}

\begin{equation}
\forall n \in \mathbb{N}^{+} \exists k \in \mathbb{N} \left( C^{k}(n) = 1 \right).\label{ecua3}%
\end{equation}
An equivalent version is,

\begin{equation}
\forall n \in \mathbb{N}^{+} \mathbb{C}(n) \textit{ is finite}.\label{ecua4}%
\end{equation}

\end{main problem}

The following functions, proposed in the definition \ref{def2}, are introduced to simplify the writing.

\begin{definition}\label{def2}
Let $k \in \mathbb{N}^{+}$, be the discrete derivative of $C^{k}(n)$, respect to $k$ is defined by $\frac{\partial C^{k}(n)}{\partial k}= C^{k+1}(n) -C^{k}(n)$, which is denoted by $\Delta_{k}$.
\end{definition}

It is intended to analyze the behavior of the function $C^{k}(n)$ to identify patterns that can characterize its behavior. When calculating the discrete derivative of $C^{k}(n)$, the following theorem is central to the algorithm proposed in this article.

\begin{theorem}\label{teo1}
Let $n \in \mathbb{N}^{+}$ and let $k \in \mathbb{N}^{+}$, $C^{k}(n)\equiv 1   \text{ (mod 2)}$; if, and only if, $\Delta_{k}>0$. And $C^{k}(n)\equiv 0   \text{ (mod 2)}$, if and only if $\Delta_{k}<0$.
\end{theorem}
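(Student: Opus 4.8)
The plan is to reduce the statement to a direct computation of $\Delta_k$ in the two possible parity cases and then to invoke the fact that every positive integer is either even or odd but not both. First I would set $m = C^{k}(n)$, so that by Definition \ref{def2} we have $\Delta_k = C^{k+1}(n) - C^{k}(n) = C(m) - m$. Everything then hinges on substituting the explicit two-branch form of $C$ from \eqref{ecua1}.

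For the two implications that read off the parity of $C^{k}(n)$, I would split according to the cases of Definition \ref{def1}. If $m \equiv 1 \pmod 2$, then $C(m) = 3m+1$, hence $\Delta_k = (3m+1) - m = 2m+1$; since $m \geq 1$ this yields $\Delta_k \geq 3 > 0$. If instead $m \equiv 0 \pmod 2$, then $C(m) = m/2$, hence $\Delta_k = \tfrac{m}{2} - m = -\tfrac{m}{2}$; since such an $m$ satisfies $m \geq 2$ this yields $\Delta_k \leq -1 < 0$. This establishes both ``$C^{k}(n)$ odd $\Rightarrow \Delta_k > 0$'' and ``$C^{k}(n)$ even $\Rightarrow \Delta_k < 0$''.

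The converse directions then follow at no extra cost from the exhaustive and mutually exclusive nature of the parity dichotomy. Suppose $\Delta_k > 0$; were $C^{k}(n)$ even, the second computation would force $\Delta_k < 0$, a contradiction, so $C^{k}(n)$ must be odd. The symmetric argument handles the hypothesis $\Delta_k < 0$. The only point worth recording explicitly is that the two branches produce values of strictly opposite sign, so in particular $\Delta_k \neq 0$ always holds; this is precisely what makes the biconditionals tight and leaves no positive integer unclassified. I do not expect any genuine obstacle here, since the result is essentially a transcription of the definition of $C$ into the language of the discrete derivative $\Delta_k$; the only care needed is to write the case split cleanly and to note the impossibility of $\Delta_k = 0$.
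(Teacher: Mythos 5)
Your proof is correct and follows essentially the same route as the paper's: a direct computation of $\Delta_{k}$ in the two parity branches of $C$, giving $\Delta_{k} = 2C^{k}(n)+1 > 0$ in the odd case and $\Delta_{k} = -\tfrac{C^{k}(n)}{2} < 0$ in the even case. If anything, you are slightly more complete than the paper, which stops after these two computations and leaves the converse implications implicit, whereas you spell out that they follow from the exhaustive and mutually exclusive parity dichotomy together with the opposite signs of the two branches.
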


The proof can be seen in the \ref{teo1A}.

\begin{example}\label{exam2}
Consider $n = 106$,\\
\begin{table}[ht]
\centering
\begin{tabular}
    {|>{\columncolor{gray!30}} p{0.63cm} |p{0.51cm} |>{\columncolor{gray!10}} p{0.51cm} |p{0.51cm} |p{0.51cm} |p{0.51cm} |p{0.51cm} |p{0.51cm} |>{\columncolor{gray!10}} p{0.51cm} |p{0.51cm} |p{0.51cm} |p{0.51cm} |p{0.51cm} |>{\columncolor{gray!10}} p{0.51cm} |}

\rowcolor{gray!30}
\hline
\textbf{k} & \multicolumn{1}{l|}{\textbf{0}} & \textbf{1} &  \textbf{2} & \textbf{3} & 
    \textbf{4} & \textbf{5} & \textbf{6} & \textbf{7} & \textbf{8} & \textbf{9} & \textbf{10} & \textbf{11} & \textbf{12}  \\ \hline

\hline
\footnotesize{$C^{k}(n)$} & $106$ & $\textbf{53}$ & $160$ & $80$ & $40$ & $20$ & $10$ & $\textbf{5}$ & $16$ & $8$ & $4$ & $2$ & $\textbf{1}$ \\
\hline
$\Delta_{k}$ & $-53$ & $\Ovalbox{\textbf{107}}$ & $-80$ & $-40$ & $-20$ & $-10$ & $-5$ & $\Ovalbox{\textbf{11}}$ & $-8$ & $-4$ & $-2$ & $-1$ & $\Ovalbox{\textbf{3}}$  \\
\hline

\end{tabular}
\caption{$C^{k}(n)$ functions and $\Delta_{k}$  to $n = 106$}\label{tabla2}
\end{table}
\end{example}

\subsection{Optimal Collatz algorithm}

The definition of two functions used in algorithm \ref{alg1} in which; the decomposition into prime factors of a natural number is required, is presented.

\begin{definition}\label{def3}
Let $n \in \mathbb{N}^{+}$, taking the decomposition into prime factors of the natural numbers $n =\prod_{i=0}^{k_{n}} p_{i}^{\alpha_{i}}$, where the $p_{i}$ are prime numbers ordered in increasing order, with $\alpha_{i},k_{n} \in \mathbb{N}$. It will be set for construction that, $ p_{0} = 2$, is defined for $\mathbb{E}(n) = p_{0}^{\alpha_{0}}$, the power $2$ greater than possess $n$; too it is defined by $\mathbb{O}(n) = \prod_{i = 1}^{k_{n}}p_{i}^{\alpha_ {i}}$ to the odd part greater than possess $n$.
\end{definition}

\begin{example}\label{exam2b}
Let the following two natural numbers be:
\begin{description}
 \item[a.] Let $n = 3200$ its decomposition into prime factors is $n = 2^{7}5^{2}$, then $\mathbb{E}(n) = 2^{7} = 128$ and $ \mathbb{O}(n) = 5^{2} = 25$.
 \item[b.] Let $n = 12782924$ its decomposition into prime factors is $n = 2^{2}7^{4}11^{3}$, then $\mathbb{E}(n) = 2^{2} = 4$ and $\mathbb{O}(n) = 7^{4}11^{3} = 3195731$.
\end{description}    
\end{example}

\begin{remark}\label{remark2}
Notice these properties on $\mathbb{E}(n)$ and $\mathbb{O}(n)$ that are evident to show. It turns out that; $\mathbb{O}^{2}(n) = \mathbb{O}(n)$ and $\mathbb{E}^{2}(n) = \mathbb{E}(n)$. This is because these two functions extract the maximum even and odd components of $n$. Furthermore, it happens that $\mathbb{E}\circ \mathbb{O}(n) = 1$ and $\mathbb{O}\circ \mathbb{E}(n) = 1$.
\end{remark}

For simplicity and to unify notation, comment \ref{remark3}, contains the variables that were used in the algorithm and how they will be used in proofs developed in this document. The proposed algorithm \ref{alg1} was implemented in Matlab2020.

\begin{remark}\label{remark3}
The variables of algorithm \ref{alg1} are sequences. Then, it indices $x(i)=x_{i}, \ldots , w(i) = w_{i}$ are used, ($i$ is often used simply). For example, in line $16$ of the algorithm \ref{alg1}, the expression $u_{i} \gets \log_{2}(\mathbb{E}(x_{i}))$ it is represented.
\end{remark}

\begin{algorithm}[ht]
\caption{Optimal Collatz algorithm}\label{alg1}
\KwIn{\\
$n \gets input(\text{"Enter an natural number:  "})$ \\ 
}
\KwOut{$w$,$x$,$y$,$z$,$u$,$v$\;
}
$i = 0$\\
$w(i) \gets i$\\
$x(i) \gets n$\\
$y(i) \gets \mathbb{O}(n)$\\
$z(i) \gets \mathbb{E}(n)$\\
$u(i) \gets log_{2}(\mathbb{E}(n))$\\
$v(i) \gets 0$\\
$flag \gets 1$\\

\While{$flag \neq 0$}{
	$i \gets i + 1$\;
	$w(i) \gets i + 1$\;
	$x(i) \gets C(y(i-1))$\;
	$y(i) \gets \mathbb{O}(x(i))$\;
	$z(i) \gets \mathbb{E}(x(i))$\;
	$u(i) \gets \log_{2}(\mathbb{E}(x(i)))$\;
	$Dist \gets distance([x(i+1),y(i)];[x(i),y(i-1)])$\;
	$v(i-1) \gets Dist$\;
  	\If {$Dist==0$}{
		$flag \gets 0$\;
  	}
}
\Return{[$w$,$x$,$y$,$z$,$u$,$v$]}
\end{algorithm}

Figure \ref{fig1} shows the behavior of algorithm \ref{alg1}, where each curve corresponds to the value of $y$ for the natural numbers from $1$ to $200$. The convergence of all the curves to the value $1$ is observed as the values on the $w$ axis move away. In addition, the values of the sequences generated by the algorithm, for the natural numbers that start with $n = 54$ and $n = 167$ are shown.

\begin{figure}[ht!]
\centering
\caption{Collatz optimal algorithm behavior of $w$ vs $y$ for values between $1-200$}\label{fig1}
\includegraphics[width=9cm,height=7cm]{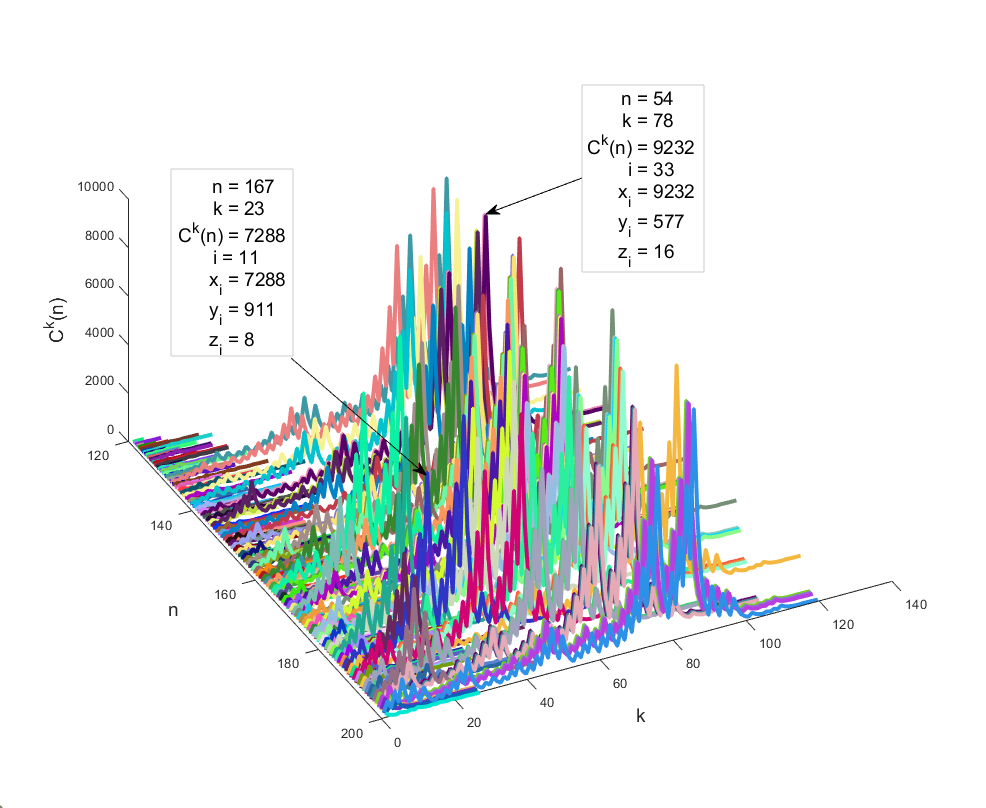}
\subcaption{3D representation}
\includegraphics[width=11cm,height=5cm]{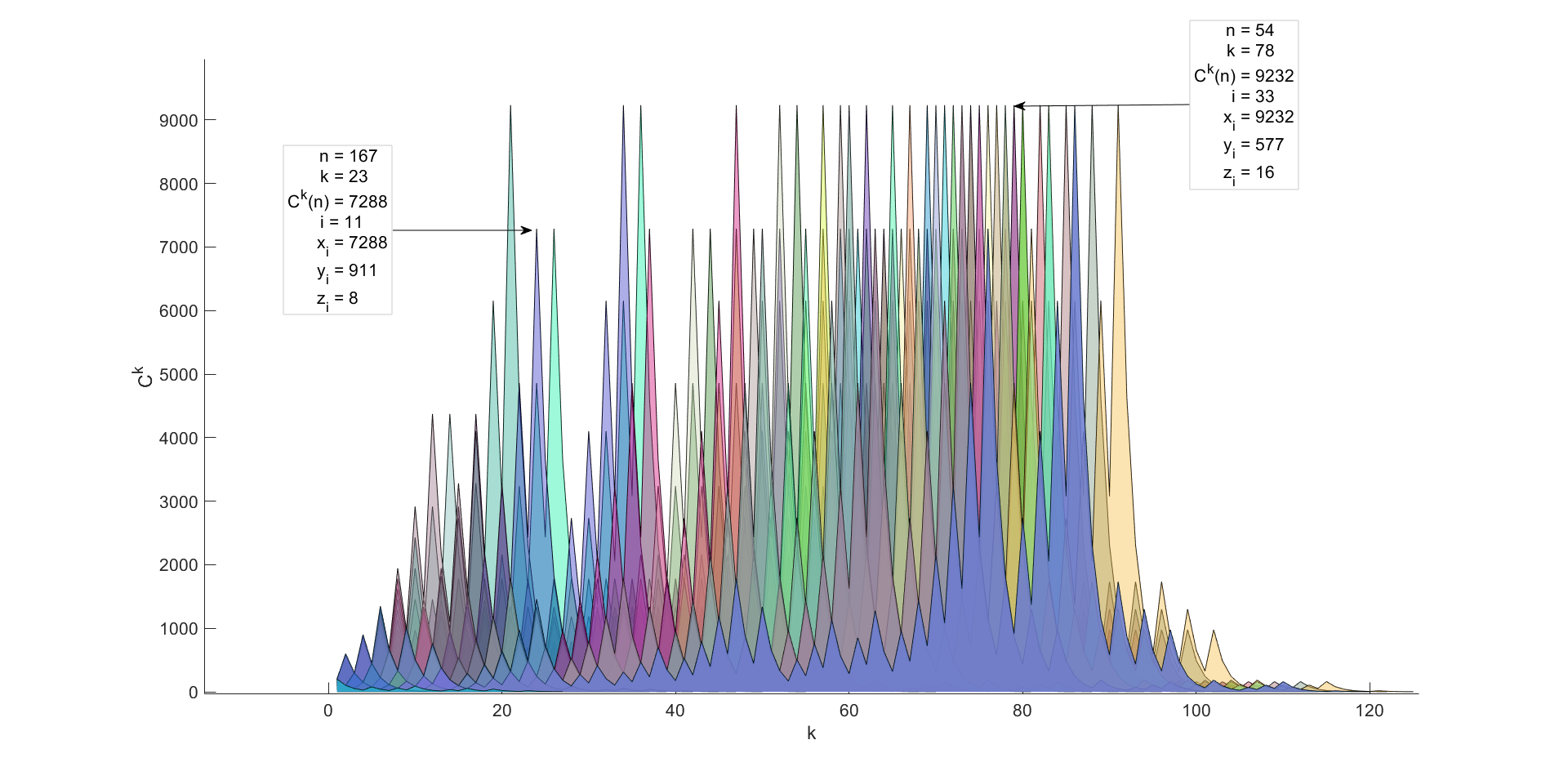}
\subcaption{Projection in the $k$,$C^{k}$ plane}
\end{figure}

Figure \ref{fig2} shows the behavior of the Collatz function, $C$ is in defined \ref{def1} as a graph, a neighborhood of node 1 was amplified.

\begin{figure}[ht!]
\includegraphics[scale=0.95]{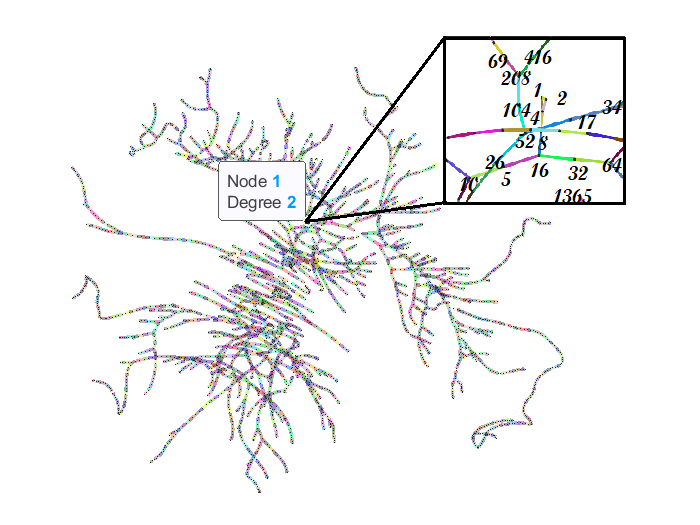}
\caption{Graph of Collatz function between $1-1500$}\label{fig2}
\end{figure}
 
Figure \ref{fig3} shows the behavior of the Optimal Collatz algorithm\ref{alg1}. The neighborhood of node 1 was amplified, to show the behavior of some neighboring nodes. It is seen how the graph in the figure is simplified. \ref{fig2}.

\begin{figure}[ht!]
\includegraphics[scale=0.7]{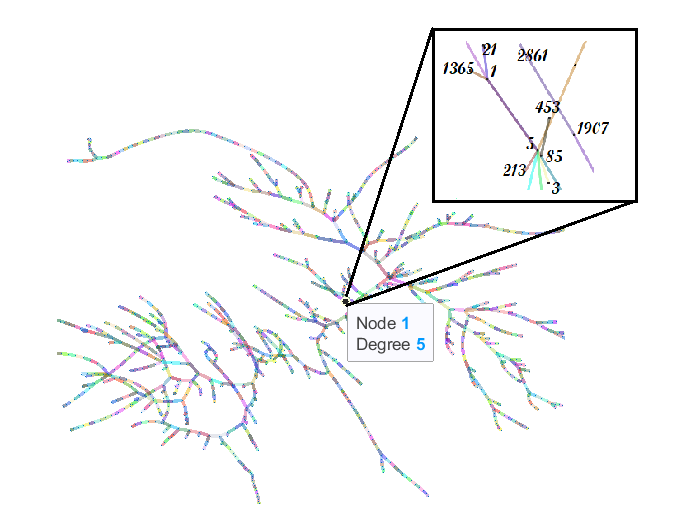}
\caption{Graph of optimal Collatz algorithm between $1-1500$}\label{fig3}
\end{figure}

A fourth presentation of the Collatz sequences can be seen in the appendix \ref{fig4a}, the sequences converge to the axis parallel to the z axis, translated to the point (4,1,0). The x axis represents the sequences $x_{k}$, the y axis represents the sequence $y_{k}$ generated by the algorithm \ref{alg1}, on the z axis the sequence $C^{k}(n)$ $n$ for each natural number  $1 \leq n \leq 200$
 
\section{Convergence of the proposed optimal Collatz algorithm}\label{secc3}

The most interesting results of this document are presented in this section, stating the basic theoretical elements that are required for the proofs of the lemma \ref{lema1}, and the corollaries \ref{coro1}- \ref{coro3}.

\subsection{Fundamental Property of Diophantine Equations}

The Diophantine equations are recalled in theorem \ref{teo2}. It is a fundamental topic of number theory and a result that is used to guarantee that the couple $(x = 4, y = 1)$ belongs to the solution of the Diophantine equation that is obtained for any natural number $m$ to which the algorithm\ref{alg1} is applied.

\begin{theorem}\label{teo2}
Let $a,b,c \in \mathbb{Z}$ be the equation

\begin{equation}
ax + by = c.\label{ecua6}%
\end{equation}
it has infinitely many solutions, if and only if,
$\alpha = \gcd(a,b)$ divides $c$, where $\gcd(a,b)$ is the greatest common divisor of $a$, $b$.
Furthermore, if a particular solution $x = s$, and $y = t$ is known, the general solution is of the form:

\begin{equation}
\left\{
\begin{array}
[c]{l}%
x = s + \eta \frac{b}{\alpha} \\
y = t -\eta\frac{a}{\alpha}
\end{array}
\right. \label{ecua7}%
\end{equation}
where $\eta \in \mathbb{Z}$
\end{theorem}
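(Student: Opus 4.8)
The plan is to prove Theorem \ref{teo2} in three stages: first the necessity of the divisibility condition, then its sufficiency (which simultaneously produces an explicit particular solution), and finally the description of the full solution set from a single known solution, from which infinitude will fall out.

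For necessity I would argue directly. If $(x_{0},y_{0})$ satisfies $ax_{0}+by_{0}=c$, then since $\alpha=\gcd(a,b)$ divides both $a$ and $b$, it divides every integer combination of them, and in particular $c=ax_{0}+by_{0}$. Hence the mere existence of a solution forces $\alpha \mid c$.

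For sufficiency I would invoke B\'ezout's identity, which is the engine of the entire argument: the Euclidean algorithm (equivalently, taking the least positive element of the set $\{ax+by : x,y\in\mathbb{Z}\}$) yields integers $u,v$ with $au+bv=\alpha$. Assuming $\alpha \mid c$, write $c=\alpha q$; then the pair $(s,t)=(uq,vq)$ satisfies $as+bt=c$, giving one explicit solution. Once the next stage exhibits the solution set as a full integer-parametrized family, infinitude is immediate.

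For the structure of all solutions, suppose $(s,t)$ is any particular solution and let $(x,y)$ be arbitrary. Subtracting $as+bt=c$ from $ax+by=c$ gives $a(x-s)=b(t-y)$. Writing $a=\alpha a'$ and $b=\alpha b'$ with $\gcd(a',b')=1$ and cancelling $\alpha$ yields $a'(x-s)=b'(t-y)$. The decisive step, and the one I expect to be the crux, is the coprimality argument: since $b' \mid a'(x-s)$ and $\gcd(a',b')=1$, Euclid's lemma forces $b' \mid (x-s)$, so $x-s=\eta\, b'$ for some $\eta\in\mathbb{Z}$, that is $x=s+\eta\frac{b}{\alpha}$. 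Substituting back recovers $y=t-\eta\frac{a}{\alpha}$, and a direct check confirms that every such pair is indeed a solution. Because $\eta$ ranges freely over $\mathbb{Z}$ (assuming $a,b$ not both zero, so the parametrization is nondegenerate), the solution set is infinite, which closes all three equivalences and establishes the stated general form.
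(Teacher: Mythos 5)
Your proof is correct, but there is no paper proof to compare it against step by step: the paper does not prove Theorem \ref{teo2} at all, it simply defers to the textbook of Niven, Zuckerman and Montgomery \citep{Niven}. What you have written is precisely the standard argument that citation points to, so your proposal is in effect a self-contained replacement for the external reference. Your three stages are the classical ones: (i) necessity of $\alpha \mid c$, since $\gcd(a,b)$ divides every integer combination $ax_{0}+by_{0}$; (ii) sufficiency via B\'ezout's identity, scaling a relation $au+bv=\alpha$ by $q=c/\alpha$ to get the particular solution $(uq,vq)$; (iii) completeness of the parametrization by subtracting the particular solution, cancelling $\alpha$, and applying Euclid's lemma to the coprime pair $(a',b')=(a/\alpha,b/\alpha)$. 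Each step is sound. Two boundary remarks, one of which you anticipated: the statement degenerates when $a=b=0$ (then $\alpha=0$ and the parametrization divides by zero), a case you correctly exclude; and in stage (iii) the phrase ``substituting back recovers $y=t-\eta\frac{a}{\alpha}$'' implicitly cancels $b'$, so the case $b'=0$ (i.e.\ $b=0$, $a\neq 0$) needs a one-line separate treatment --- there $a'=\pm 1$, the homogeneous equation forces $x=s$, and every $y$ is reached by choosing $\eta=a'(t-y)$, so the parametrization still captures all solutions. With that patched, your argument is complete; the paper's citation buys brevity, while your version makes the convergence material of Section \ref{secc3} independent of the external reference, which is arguably preferable given how centrally equations \eqref{ecua6}--\eqref{ecua7} are used in Corollaries \ref{coro1} and \ref{coro2}.
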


The proof can be consulted in Zukerman et al. \citep{Niven}. One consequence is corollary \ref{coro1}, related to the Collatz function.
 
\begin{corollary}\label{coro1}
Let $C:\mathbb{N} \to \mathbb{N}$ be the Collatz function and then; $x = C(y)$ is solvable. To simplify the proof, it is divided into the following cases:
\begin{itemize}
\item[a.] If $y \equiv 1  \text{ (mod 2)}$ then $x = C(y)$ is solvable.
\item[b.] If $y \equiv 0  \text{ (mod 2)}$ then $x = C(y)$ is solvable.
\end{itemize}
\end{corollary}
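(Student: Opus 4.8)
The plan is to recast the relation $x = C(y)$ as a linear Diophantine equation of the form \eqref{ecua6} in each parity branch, and then invoke Theorem \ref{teo2} so that solvability follows from the divisibility criterion $\gcd(a,b)\mid c$. In both cases the verification of this criterion is the whole content of the argument, and exhibiting a particular solution lets me write down the full family via \eqref{ecua7}.

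For case (a), when $y\equiv 1 \text{ (mod 2)}$, Definition \ref{def1} gives $C(y)=3y+1$, so $x=C(y)$ rearranges to $x-3y=1$. This is \eqref{ecua6} with $a=1$, $b=-3$, $c=1$. Since $\gcd(1,-3)=1$ divides $1$, Theorem \ref{teo2} certifies infinitely many integer solutions. I would exhibit the particular solution $(s,t)=(4,1)$, which satisfies $4-3\cdot 1=1$ and is precisely the convergence pair emphasized just before Theorem \ref{teo2}; the general family from \eqref{ecua7} then reads $x=4-3\eta$, $y=1-\eta$ with $\eta\in\mathbb{Z}$.

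For case (b), when $y\equiv 0 \text{ (mod 2)}$, we have $C(y)=y/2$, so $x=C(y)$ rearranges to $2x-y=0$. This is \eqref{ecua6} with $a=2$, $b=-1$, $c=0$. Since $\gcd(2,-1)=1$ divides $0$, Theorem \ref{teo2} again yields infinitely many solutions; a convenient particular solution is $(s,t)=(1,2)$, with the general family $x=1-\eta$, $y=2-2\eta$ obtained from \eqref{ecua7}.

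The main obstacle I anticipate is reconciling the purely integer-valued conclusion of Theorem \ref{teo2} with the fact that $C$ acts on $\mathbb{N}^{+}$ and that each branch imposes a parity constraint on $y$ (odd in case (a), even in case (b)). I would close the argument by restricting the free parameter $\eta$ so that the sampled component stays in $\mathbb{N}^{+}$ and respects the required parity, thereby confirming that admissible natural-number solutions persist in each family and, in particular, that $(x,y)=(4,1)$ lies in the solution set of case (a).
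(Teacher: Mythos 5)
Your proposal is correct and follows essentially the same route as the paper: both cases are recast as the linear Diophantine equations $x-3y=1$ and $2x-y=0$ and solvability is read off from Theorem \ref{teo2} via the $\gcd$ criterion. The only cosmetic difference is your choice of particular solution $(4,1)$ in case (a), where the paper instead takes $(s,t)=(x_{1},y_{0})$ supplied by Algorithm \ref{alg1} so that the general solution feeds directly into Corollary \ref{coro2}.
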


The proof can be seen in the \ref{coro1A}.

The corollary \ref{coro2} presents a relationship between $\eta$ and $y$, that is used in algorithm\ref{alg1}.

\begin{corollary}\label{coro2}
Let $C:\mathbb{N} \to \mathbb{N}$ be the Collatz function; then, so that, for all $i \in \mathbb{N}$ then, $x_{i+1} = C(y_{i})$  algorithm \ref{alg1} is solvable, and the general solution is of the form:
 
\begin{equation}
\left\{
\begin{array}
[c]{l}%
x_{i+1} = x_{1} - 3 \eta_{i} \\
y_{i} = y_{0} - \eta_{i}.
\end{array}
\right. \label{ecua8}%
\end{equation}
where $\eta_{i} \in \mathbb{Z}$.
  
In addition, from the equations \eqref{ecua8} it follows that:
\begin{equation}
\eta_{i}=y_{0}-y_{i}\label{ecua9}%
\end{equation}

\end{corollary}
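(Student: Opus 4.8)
The plan is to reduce the claim to a direct application of Theorem \ref{teo2}, exploiting the fact that the sequence $y_i$ produced by algorithm \ref{alg1} consists entirely of odd numbers. First I would observe that, by construction, $y_i = \mathbb{O}(x_i)$ is the odd part of $x_i$, so $y_i \equiv 1 \text{ (mod 2)}$ for every $i \in \mathbb{N}$. Consequently only case (a) of Corollary \ref{coro1} is ever invoked, and the update in the algorithm always takes the form
\[
x_{i+1} = C(y_i) = 3 y_i + 1 .
\]

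The next step is to recast this relation as a single Diophantine equation with coefficients that do not depend on $i$. Rearranging gives
\[
x_{i+1} - 3 y_i = 1 ,
\]
which is of the shape $ax + by = c$ with $a = 1$, $b = -3$, and $c = 1$. Since $\alpha = \gcd(1,-3) = 1$ divides $c = 1$, Theorem \ref{teo2} guarantees infinitely many integer solutions and supplies the general solution once a particular one is exhibited.

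I would then read off a particular solution straight from the initialization of the algorithm: at the first iteration $x_1 = C(y_0) = 3 y_0 + 1$, so the pair $(s,t) = (x_1, y_0)$ satisfies $x_1 - 3 y_0 = 1$. Substituting $a = 1$, $b = -3$, $\alpha = 1$ into the general-solution formula \eqref{ecua7} yields
\[
x = x_1 + \eta \frac{-3}{1} = x_1 - 3\eta, \qquad y = y_0 - \eta \frac{1}{1} = y_0 - \eta ,
\]
and relabeling the free parameter $\eta$ as $\eta_i$ for the $i$-th pair $(x_{i+1}, y_i)$ reproduces exactly \eqref{ecua8}. Equation \eqref{ecua9} then follows by isolating the parameter in the second line: from $y_i = y_0 - \eta_i$ one obtains $\eta_i = y_0 - y_i$.

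The only delicate point is the very first step, namely verifying that every $y_i$ is odd so that the \emph{same} equation $x - 3y = 1$, with constant coefficients independent of $i$, governs all iterations and the single particular solution $(x_1, y_0)$ remains valid throughout. Once this uniformity is secured, the remainder of the argument is a mechanical substitution into Theorem \ref{teo2} followed by elementary algebra, so I expect no further obstacles.
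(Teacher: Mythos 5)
Your proposal is correct and follows essentially the same route as the paper: both reduce the update $x_{i+1}=C(y_i)=3y_i+1$ (valid because $y_i=\mathbb{O}(x_i)$ is always odd) to the Diophantine equation $x-3y=1$, invoke Theorem \ref{teo2} with the particular solution $(x_1,y_0)$, and obtain \eqref{ecua8}. Your derivation of \eqref{ecua9} by simply isolating $\eta_i$ in the second line of \eqref{ecua8} is cleaner than the paper's detour through $4y_i+1=4y_0+1-4\eta_i$, but it is the same argument in substance.
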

The proof can be seen in the \ref{coro2A}.
\begin{remark}
It is observed that the couple $(4,1)$ satisfies the Diophantine equation $x - 3y = 1$ independent of the particular solution $(x_{1},y_{0})$ used by the algorithm \ref{alg1}. In example \ref{exam3}, it is shown how algorithm \ref{alg1} calculates the values of $x_{i},y_{i}...$ for $ n = 3200$, $x_{1},y_{0}$, and $\eta_{i}$ in \eqref{ecua9}.    
\end{remark}

\begin{example}\label{exam3}
Consider $n = 3200$, the values for the variables $x_{i},y_{i},z_{i},u_{i}, \eta_{i} $\\ are calculated in the table \ref{tabla3}.
In row $i = 7$ of table \ref{tabla3} the algorithm \ref{alg1} obtains the following results: $x_{i} = 16$, $y_{i} = \mathbb{O}(16) = 1 $, $y_{i} = \mathbb{E}(16) = 2^{4}$, $u_{i} = log_{2}(16) = 4$ and $\eta_{i} = 24$. Moreover, it will be observed that the sequences $x_{i} \rightarrow 4$, $y_{i} \rightarrow 1$, $\eta_{i} \rightarrow 24$, and $v_{i} \rightarrow 0$ when $i \rightarrow \infty$.

\begin{table}[t]
 \centering
\begin{tabular}{|c|c|>{\columncolor{gray!10}}c|c|c|c|c|}
\hline
\rowcolor{gray!30}
\hline
$w$ & $x=C(\mathbb{O}(y))$ & $y=\mathbb{O}(x)$ & $z=\mathbb{E}(x)$ & $u=\log_{2}(z)$ & $\eta_{i}$ & $v=Dist$ \\ 
\hline 
$0$ & $3200$ & $\textbf{25}$ & $2^{7}$ & $7$ & $0$ & $18.9$ \\ 
\hline 
$1$ & $76$ & $\textbf{19}$ & $2^{2}$ &  $2$ & $6$ & $31.6$ \\ 
\hline 
$2$ & $58$ & $\textbf{29}$ & $2^{1}$ &  $1$ & $-4$ & $56.9$ \\ 
\hline 
$3$ & $88$ & $\textbf{11}$ & $2^{3}$ &  $3$ & $14$ & $18.9$ \\ 
\hline 
$4$ & $34$ & $\textbf{17}$ & $2^{1}$ &  $1$ & $8$ & $12.6$ \\ 
\hline 
$5$ & $52$ & $\textbf{13}$ & $2^{2}$ &  $2$ & $12$ & $25.2$ \\ 
\hline 
$6$ & $40$ & $\textbf{5}$ & $2^{3}$ &  $3$ & $20$ & $12.6$ \\ 
\hline
\rowcolor{gray!10}
$\textbf{7}$ & $16$ & $\Ovalbox{\textbf{  1  }}$ & $2^{4}$ &  $4$ &  $\Ovalbox{\textbf{  24  }}$ & $\textbf{0}$ \\ 
\hline 
$8$ & $\Ovalbox{\textbf{  4  }}$ & $\textbf{1}$ & $2^{2}$ &  $2$ & $24$ & $0$ \\ 
\hline
$9$ & $4$ & $\textbf{1}$ & $2^{2}$ &  $2$ & $24$ & $0$ \\ 
\hline
$\vdots$ & $\vdots$ & $\vdots$ & $\vdots$ &  $\vdots$ & $\vdots$ & $\vdots$ \\ 
\hline 
$i$ & $4$ & $\textbf{1}$ & $2^{2}$ &  $2$ & $24$ & $0$ \\ 
\hline 
$\vdots$ & $\vdots$ & $\vdots$ & $\vdots$ &  $\vdots$ & $\vdots$ & $\vdots$ \\ 
\hline 
\end{tabular}
\caption{Collatz optimal algorithm behavior  to $n = 3200$}\label{tabla3}
\end{table}
\end{example}

\subsection{Proof of the convergence of Collatz's optimal algorithm}
The central idea of the convergence proof of the proposed algorithm \ref{alg1} is to analyze the behavior of the elements close to the minimum in the sequence $y_{i}$ for some $m \in \mathbb{N}^{ +}$.

\begin{lemma}[Convergence of Collatz's optimal algorithm]\label{lema1}
\[\forall m \in \mathbb{N}^{+}\exists i \in \mathbb{N}(y_{i} = 1)\]
\end{lemma}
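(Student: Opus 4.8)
The plan is to combine the Diophantine parametrization of Corollary~\ref{coro2} with a minimality argument applied to the sequence of odd parts $\{y_i\}_{i\in\mathbb{N}}$ produced by Algorithm~\ref{alg1} on input $m$. Since each $y_i=\mathbb{O}(x_i)$ is a positive odd integer, the set $S=\{y_i : i\in\mathbb{N}\}\subseteq\mathbb{N}^{+}$ is bounded below, so by the well-ordering principle it attains a least element $y_{\min}=y_{i_0}$ at some index $i_0$. The whole lemma then reduces to the single claim that $y_{\min}=1$: once an index with $y_i=1$ is exhibited we are done, and conversely if $y_{\min}>1$ the sequence never reaches $1$.

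First I would record the fixed-point behavior that makes $1$ the natural terminal value. By Corollary~\ref{coro2} every consecutive pair $(x_{i+1},y_i)$ is an integer solution of $x-3y=1$, and the Remark following it notes that $(4,1)$ lies on this solution line independently of the particular solution used to seed the algorithm. If $y_i=1$ for some $i$, then $x_{i+1}=C(1)=4$, $y_{i+1}=\mathbb{O}(4)=1$, the distance variable $v$ vanishes, and the while-loop terminates; this is exactly the stabilization observed in Table~\ref{tabla3}. Thus $1$ is an absorbing value, and it suffices to show the orbit is driven into it.

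Next I would attempt the descent step at the minimum. At the index $i_0$ the next odd part is $y_{i_0+1}=\mathbb{O}(3y_{i_0}+1)$, and by minimality $y_{i_0+1}\ge y_{i_0}$. The strategy is to argue that for every odd $y>1$ the Syracuse step $y\mapsto\mathbb{O}(3y+1)$ eventually returns a value strictly below $y$, contradicting the minimality of any candidate $y_{\min}>1$ and forcing $y_{\min}=1$. Using \eqref{ecua9}, namely $\eta_i=y_0-y_i$, one can rewrite this descent condition in terms of the exponent sequence $u_i=\log_2(\mathbb{E}(x_i))$, since a net decrease in $y$ corresponds to the accumulated division by powers of two outweighing the single multiplication by $3$.

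The hard part — and, I expect, the genuine obstruction — is precisely this descent-at-the-minimum step. Membership in the solution set of $x-3y=1$ is only a \emph{necessary} byproduct of one Collatz step; it is not \emph{sufficient} to single out the trivial pair $(4,1)$, because the same equation is satisfied by the entire infinite family $(4+3k,\,1+k)$ coming from Theorem~\ref{teo2}, and the linear parametrization alone cannot exclude a non-trivial odd cycle or an unbounded orbit. Consequently the assertion that no odd $y>1$ can be a global minimum of its own forward Syracuse orbit is logically equivalent to the Collatz conjecture itself, so this is the step where an independent arithmetic input — controlling the $2$-adic valuation $u_i$ of $3y+1$ well enough to guarantee eventual descent — must be supplied rather than deduced from the Diophantine structure. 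I would therefore concentrate all the effort here and treat the remaining bookkeeping (well-ordering and absorption at $1$) as routine.
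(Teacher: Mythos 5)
Your setup---well-ordering the set of odd parts produced by Algorithm~\ref{alg1}, observing that $1$ is absorbing, and reducing the lemma to the claim that no odd $y>1$ can be the minimum of its own forward orbit under $y\mapsto\mathbb{O}(3y+1)$---matches the skeleton of the paper's proof of Lemma~\ref{lema1} exactly (the paper's reductio with $S_{m^{*}}$ and $y_{i_{*}}=\min(S_{m^{*}})$ is your $y_{\min}$ argument). The genuine gap is the one you name yourself: you never carry out the descent-at-the-minimum step, so what you have written is a reduction of the lemma to an equivalent statement, not a proof of it. At precisely this point the paper does not appeal to the Diophantine parametrization at all (you are right that Corollary~\ref{coro2} cannot single out $(4,1)$ from the family $(4+3k,1+k)$); instead it performs a case analysis on $2$-adic valuations: writing $y_{i_{*}}-1=2^{\alpha}y_{*}$ with $y_{*}=\mathbb{O}(y_{i_{*}}-1)$, it handles $\alpha\geq 2$ by computing $y_{i_{*}+1}=3\cdot 2^{\alpha-2}y_{*}+1<2^{\alpha}y_{*}+1=y_{i_{*}}$ and contradicting minimality, and for $\alpha=1$ it passes to the predecessor, writing $y_{i_{*}-1}=2^{\beta}\mu+1$ and splitting on $\beta=1,2,3$ and $\beta>3$. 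So the ``independent arithmetic input'' you say must be supplied is, in the paper, exactly this control of $\nu_{2}(y_{i_{*}}-1)$ and $\nu_{2}(y_{i_{*}-1}-1)$; your proposal stops before attempting it.

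That said, your diagnosis of where the difficulty lives is accurate, and the paper's own execution of that step is not watertight, so you should not treat the case analysis as a routine patch. For instance, in the paper's Case~2b.2 the quantity $3\mu+1$ is even (since $\mu$ is odd), so $y_{i_{*}}=\mathbb{O}(C(y_{i_{*}-1}))=\mathbb{O}\bigl(4(3\mu+1)\bigr)=\mathbb{O}(3\mu+1)\neq 3\mu+1$; the ``absurdity'' obtained there by equating the odd number $y_{i_{*}}$ with the even number $3\mu+1$ rests on that misidentification (take $y_{i_{*}-1}=13$, so $\mu=3$, $C(13)=40$ and $y_{i_{*}}=5\neq 10$). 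The same issue affects the $\alpha=2$ subcase of Case~2a, and Case~2b requires $i_{*}\geq 1$ so that a predecessor exists, which is not guaranteed. In short: your proposal is incomplete at the decisive step, and closing it would require a genuine new bound on the exponent sequence $u_{i}$ that neither your Diophantine framework nor the paper's case split currently delivers.
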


\begin{proof}
\textbf{[Reductio ad absurdum]} Suppose there exists $m^{*} \in \mathbb{N}^{+}$, so that, for every $i \in \mathbb{N}$, so that $y_{i} \neq 1$, then constructing the set \[S_{m^{*}} = \lbrace z \in \mathbb{N} : \exists i \in \mathbb{N} (z = y_{i} \neq 1), y_{i} \text{ generated by algorithm~\ref{alg1}, } y_{0} = \mathbb{O}(m^{*}) \rbrace.\] It follows that $S_{m^{*}} \neq \emptyset$, because $y_{0} \in S_{m^{*}}$. Then, by the well order of the natural numbers $S_{m^{*}}$ has a minimum. That is, $\exists y_{i_{*}} \in \mathbb{N} ( y_{i_{*}} = min \left( S_{m^{*}}\right) )$ then, $y_{i_{*}} \neq 1$ is generated by algorithm \ref{alg1}.  Taking $y_{i_{*}} - 1$ it is know that is an even number it follows, $y_{i_{*}} - 1 \not\in S_{m^{*}}$, applying $\mathbb{O}$, so that $\mathbb{O}(y_{i_{*}} - 1)=y_{*}$, also, $y_{*}\not\in S_{m^{*}}$, thus, implying two cases: either $y_{*} = 1$, or $y_{*} \neq 1$ and is not generated by algorithm \ref{alg1}.

\textbf{Case1:} If $y_{*} = 1$, then $\mathbb{E}(y_{i_{*}} - 1)=2^{\alpha}$ with $\alpha>1$. Applying $\mathbb{E}$ to both sides, it follows that $\mathbb{E}^{2}(y_{i_{*}} - 1) = \mathbb{E}(2^{\alpha})=2^{\alpha}$. But, by comment \ref{remark2}, $\mathbb{E}^{2}=\mathbb{E}$, thus $\mathbb{E}(y_{i_{*}} - 1) = y_{i_{*}} - 1$. Solving for $ y_{i_{*}}$, so that $y_{i_{*}} = 2^{\alpha} + 1$, with $\alpha>1$. Applying $C$ in the last expression, we get $C(y_{i_{*}}) = 3(2^{\alpha} + 1) + 1 = 3\cdot2^{\alpha} + 4 =2^{2}(3\cdot2^{\alpha-2}+1)$. But $C( y_{i_{*}}) = x_{i_{*} + 1}$, so $x_{i_{*} + 1} = 2^{2}(3\cdot2^{\alpha-2}+1)$ and now, applying $\mathbb{O}$ to both sides, it is had $\mathbb{O}(x_{i_{*} + 1})= 3\cdot2^{\alpha-2}+1$, but $\mathbb{O}(x_{i_{*} + 1})= y_{i_{*} + 1}$. Therefore, $y_{i_{*} + 1} = 3\cdot2^{\alpha-2}+1$. By hypothesis, it follows that $y_{i_{*}} < y_{i_{*} + 1}$, since $y_{i_{*}} = min(S_{m^{*}})$. Substituting $y_{i_{*}}$ and $y_{i_{*} + 1}$ for their respective values, the following expression is obtained, $2^{\alpha} + 1 < 3\cdot2^{\alpha-2}+1$, simplifying, so that $4 < 3$, which is absurd.

\textbf{Case2:} If $y_{*} \neq 1$ and it is not generated by algorithm \ref{alg1}, on the other hand, taking $\mathbb{E}(y_{i_{*}} - 1) = 2^{\alpha}$ with $\alpha \geq 1$, then $y_{i_{*}} = 2 ^{\alpha}y_{*} + 1$, two cases occur: $\alpha > 1$ or $\alpha = 1$.

\textbf{Case2a:} If $\alpha > 1$ similar to case 1. Applying $C$, it is had $x_{i_{*}+1} = 3*2^{\alpha}y_{i_{*}} + 4$, then by algorithm \ref{alg1}. It is apply $\mathbb{O}$, it is obtained that $y_{i_{*}+1} = 3*2^{\alpha - 2}y_{*} + 1$, but for hypothesis $y_{i_{*}+1} \geq y_{i_{*}}$, solving and simplifying it follows that $3*2^{\alpha - 2}y_{*} \geq 2^{\alpha}y_{*}$ which is absurd.

\textbf{Case2b:} If $\alpha = 1$ and considering $y_{i_{*} - 1} = 2^{\beta}\mu + 1$ with $\mu \equiv 1$ (mod2) and $\mu$ is the maximum decomposition of power primes non two. There are four cases to analyze $\beta = 1$ or $\beta = 2$ or $\beta = 3$ or $\beta > 3$.

\textbf{Case2b.1:} If $\beta = 1$ it is considers $y_{i_{*} - 1} = 2\mu + 1$. By algorithm \ref{alg1} then, $y_{i_{*}} = \mathbb{O}( C(y_{i_{*} - 1}))= 2y_{*} + 1$ substituting and simplifying. It follows that $3\mu + 2 = 2y_{*} + 1$, where it is obtained that $3\mu + 1 = y_{i_ {*}} - 1$. Substituting and simplifying, we have $2\mu +1 \leq y_{i_{*}}$ which means that $y_{i_{*} - 1} \leq y_{i_{*}}$. Absurd, since contradicts that $y_ {i_{*}}$ is minimum of $S_{m^{*}}$.

\textbf{Case2b.2:} If $\beta = 2$ it is considers $y_{i_{*} - 1} = 2^{2}\mu + 1$. By algorithm \ref{alg1} then, $y_{i_{*}} = \mathbb {O}(C(y_{i_{*} - 1}))= 2y_{*} + 1$ substituting and simplifying. It follows that $3\mu + 1 = 2y_{*} + 1$, where simplifying it is obtained that $3\mu + 1 = y_{i_{*}}$. Absurd, since $y_{i_{*}}$ is an odd number and $3\mu + 1$ is an even number.

\textbf{Case2b.3:} If $\beta = 3$ it is considers $y_{i_{*} - 1} = 2^{3}\mu + 1$. By algorithm \ref{alg1} then, substituting and simplifying. It follows that $x_{i_{*}} - 1  = 8y_{*} + 3$, $x_{i_{*}+1} - 1  = 6y_{*} + 3$, and $x_{i_{*}+2} - 1  = 9y_{*} + 6$. Absurd, it contradicts that they must all be multiples of $3$, when $y_{*}$ is not a multiple of $3$, and when $y_{*}$ is a multiple of $3$ it is also absurd; because it contradicts that $y_{i_{*}+2}$ is odd.

\textbf{Case2b.4:} If $\beta > 3$ it is considers $y_{i_{*} - 1} = 2^{\beta}\mu + 1$. By algorithm \ref{alg1} then, $y_{i_{*}} = \mathbb {O}(C(y_{i_{*} - 1}))= 2y_{*} + 1$ substituting and simplifying. It follows that $3*2^{\beta-2}\mu + 1 = 2y_{*} + 1$, where simplifying it is obtained that $3*2^{\beta - 3}\mu = y_{*}$. Absurd, since $y_{*}$ is an odd number and $3*2^{\beta - 3} \mu $ is an even number.

Hence, it is concluded that $\forall m \in \mathbb{N}^{+}\exists i \in \mathbb{N}(y_{i} = 1)$.
\end{proof}

\begin{remark}\label{remark4}
As a challenge, the reader can seek to reduce the number of cases in the previous proof.
\end{remark}

A corollary \ref{coro3} is presented that shows a relationship between the optimal algorithm proposed and its convergence with the conjecture \ref{main:prob1}.

\begin{corollary}\label{coro3}
\begin{equation}
\forall n \in \mathbb{N}^{+} \mathbb{C}(n) \textit{ is finite}.%
\end{equation}

\end{corollary}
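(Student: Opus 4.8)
The plan is to derive equation \eqref{ecua4}, and hence the Collatz conjecture \eqref{ecua3}, directly from Lemma~\ref{lema1} by making precise the correspondence between the sequence $y_i$ produced by algorithm~\ref{alg1} and the odd values occurring in the classical Collatz orbit $\mathbb{C}(n)$. First I would fix an arbitrary $n \in \mathbb{N}^{+}$ and run the algorithm with input $n$, so that $y_0 = \mathbb{O}(n)$ and, for each $i \geq 1$, $x_i = C(y_{i-1}) = 3y_{i-1}+1$ and $y_i = \mathbb{O}(x_i)$. The key observation is that each algorithm step realizes a block of ordinary Collatz steps: starting from the odd number $y_{i-1}$, applying $C$ once yields the even number $3y_{i-1}+1 = x_i$, and thereafter $C$ merely halves repeatedly until the odd part $y_i = \mathbb{O}(x_i)$ is reached. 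Concretely, passing from $y_{i-1}$ to $y_i$ costs exactly $1 + u_i$ applications of $C$, where $u_i = \log_2(\mathbb{E}(x_i))$ is the exponent recorded by algorithm~\ref{alg1}, while the initial reduction from $n$ down to $y_0$ costs $u_0 = \log_2(\mathbb{E}(n))$ halvings.

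Next I would record the inclusion that powers the finiteness argument. Each block contributes the finitely many orbit values $x_i, x_i/2, \ldots, y_i$ to $\mathbb{C}(n)$, and together with the initial halvings carrying $n$ to $y_0$ these exhaust every value the orbit visits. By Lemma~\ref{lema1} there exists $i_{*} \in \mathbb{N}$ with $y_{i_{*}} = 1$; take $i_{*}$ least. Then the orbit of $n$ reaches the odd value $1$ after the finitely many $C$-steps accumulated over the blocks $i = 1,\ldots,i_{*}$, that is, after $u_0 + \sum_{i=1}^{i_{*}} (1 + u_i)$ applications of $C$. From that point the orbit enters the terminal cycle, since $C(1)=4$, $C(4)=2$ and $C(2)=1$, so no values beyond $\{1,2,4\}$ can appear afterwards.

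Finally I would conclude that $\mathbb{C}(n)$ is the union of the finitely many distinct values produced during the blocks $i \leq i_{*}$ together with $\{1,2,4\}$, hence $\mathbb{C}(n)$ is finite; since $n$ was arbitrary, this establishes \eqref{ecua4}, which is equivalent to \eqref{ecua3}. The only genuinely delicate point is the first step, namely verifying rigorously that the $y_i$ generated by algorithm~\ref{alg1} coincide with the odd terms of the true Collatz orbit and that each transition consumes only finitely many $C$-steps; I expect this bookkeeping to be the main obstacle of the corollary, although it is elementary, since the finiteness of each block follows at once from the fact that $\mathbb{E}(3y_{i-1}+1)$ is a finite power of $2$ for every finite odd input. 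All the substantive difficulty has already been absorbed into Lemma~\ref{lema1}.
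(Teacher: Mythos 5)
Your proposal is correct and takes essentially the same approach as the paper: both invoke Lemma~\ref{lema1} to get a least index with $y_{i}=1$, view each algorithm step as one $3y+1$ application followed by $u_{i}=\log_{2}(\mathbb{E}(x_{i}))$ halvings of the ordinary Collatz map, and conclude finiteness of $\mathbb{C}(n)$ by counting these blocks, which is exactly the content of equation~\eqref{ecua10}. Your block-by-block bookkeeping is simply a more explicit rendering of the paper's own argument.
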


\begin{proof}
Let $n \in \mathbb{N}^{+}$, construct the sequence $\lbrace y_{i} \rbrace _{i \in \mathbb{N}} $ by the algorithm \ref{alg1} where $y_{0} = \mathbb{O}(n)$, is convergent by the lemma \ref{lema1}. Then, exists $ i_{*} \in \mathbb{N}$, such that $y_{i_{*}} = 1$; then, constructing the set:
\[ S = \lbrace i \in \mathbb{N} : y_{i} = 1, y_{i} \text{ generated by the algorithm\ref{alg1}} \rbrace.\] Then, $S \neq \emptyset$, since $i_{*} \in S$, by well order of the naturals, exist $i_{min} \in \mathbb{N}^{+}$ such that $i_{min} = min(S)$, also for all $i \in S$, such that, $i>i_{min}$, it is had $y_{i}=1$, applying $C$ to this last expression, it is obtained that, $C(y_{i})=4$, an expression that when applying $\mathbb{O}$, so that $\mathbb{O}(C(y_{i}))=\mathbb{O}(4)=1$, then $y_{i+1} = 1$. Therefore, $Image(\mathbb{C}(n))$ is finite; since it is the union of the finite images of $y_{i+1}\neq 1$, joined with $1$ and joined with all the images of applying $C$ when $C^{k}(n)$ is even (that are known when the derivative $\Delta_{k}$ is negative), and that is calculated by the algorithm \ref{alg1} with the variable $u = log_{2}(\mathbb{E})$. This can be summarized in the following equation \ref{ecua10}.

\begin{equation}
\vert \mathbb{C}(n) \vert = i_{min} + \sum_{i=0}^{i_{min}} log_{2}(\mathbb{E}(x_{i})) + 1.\label{ecua10}
\end{equation}

\end{proof}

\begin{example}\label{exam4}
  For $n = $3200, in table \ref{tabla3}. The gray row shows the step where the algorithm proposed in the $y_{i}$ column reaches the value 1 for the first time. For this reason, $i_{min} = 7$, adding the values of the column $u=log_{2}(x_{i})$, $23$ is obtained, and applying the formula \eqref{ecua10}, which was derived from the corollary \ref{coro3}, $\vert\mathbb{C} \vert= 31$ is obtained, which coincides with the application of the Collatz functions $C$. That is, $C^ {31}(3200)$; is a term in the sequence generated by the Collatz function \eqref{ecua1} that reaches $1$ for the first time.
  
\end{example}

The formula \eqref{ecua10} was implemented in the proposed algorithm \ref{alg1}. The results were verified with all the runs that were made. Furthermore, a part of the mathematical community believes that the conjecture is false and that it fails in some power of $3$, in figure \ref{fig5} the opposite is seen, because when calculating the algorithm \ref{alg1} on the first 600 powers of 3, the sequences $x_{i}$ and $y_{i}$ converge respectively to $4$ and $1$, all of them approach the axis parallel to the z-axis centered at $(4,1,0)$.

\begin{figure}[ht!]
\centering
\includegraphics[width=13cm,height=10cm]{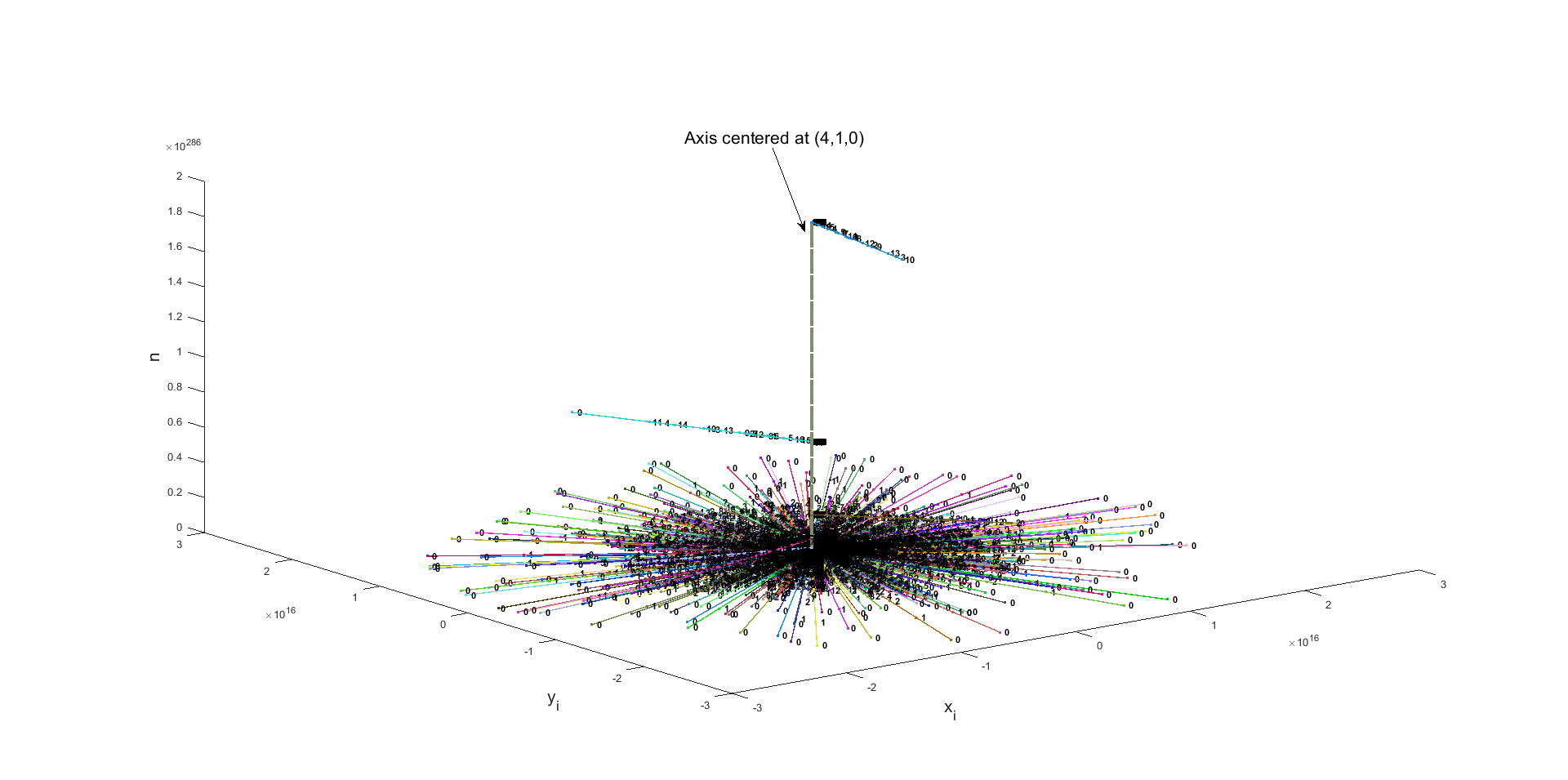}
\caption{Collatz optimal algorithm behavior of $x$ vs $y$ for values power of $3$ between $1-600$.}\label{fig5}
\end{figure}

\section{Conclusions}\label{secc4}

In this research, an optimal algorithm \ref{alg1} for the Collatz function was proposed, decomposing each term of the sequence into two factors, a power factor of $2$ that is calculated with the function $\mathbb{E}$ and the other non-power factor of $2$ that is calculated with the function $\mathbb{O}$. The proof of their convergence was carried out. Several examples were presented that validate the proofs.

A relationship is established between the equation \eqref{ecua10}, which is obtained as a consequence of the algorithm \ref{alg1} with the classical Collatz function. The corollary \ref{coro3} is an important proof of the possible convergence of the classical Collatz conjecture.

\subsection*{Acknowledgements}
The author want to thank MINCIENCIAS for the financial support through the project Plataforma de analítica de datos como soporte a la gestión de la demanda operacional en la industria manufacturera de la red de aliados de Revlog, approved in call 890 of 2020. The authors also want to thank the Universidad Nacional de Colombia - Manizales, for allowing the spaces for this research.

\appendix

\section*{Appendix Section}
\label{sec:appendix}

\begin{theorem}\label{teo1A}
Let $n \in \mathbb{N}^{+}$ and let $k \in \mathbb{N}^{+}$, $C^{k}(n)\equiv 1   \text{ (mod 2)}$; if, and only if, $\Delta_{k}>0$. And $C^{k}(n)\equiv 0   \text{ (mod 2)}$, if and only if $\Delta_{k}<0$.
\end{theorem}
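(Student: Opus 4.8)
The plan is to reduce everything to a single-step computation by setting $a = C^{k}(n)$ and evaluating $\Delta_{k} = C^{k+1}(n) - C^{k}(n) = C(a) - a$ according to the parity of $a$, exactly as prescribed by Definition~\ref{def1}. Since $k \in \mathbb{N}^{+}$ and $n \in \mathbb{N}^{+}$, the value $a$ is a positive integer, so $a \geq 1$; this lower bound is what upgrades the resulting inequalities from weak to strict.

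First I would treat the two parity cases for $a$ directly. If $a \equiv 1 \pmod 2$, then $C(a) = 3a + 1$, so $\Delta_{k} = (3a+1) - a = 2a + 1 \geq 3 > 0$. If $a \equiv 0 \pmod 2$, then $a \geq 2$ and $C(a) = a/2$, so $\Delta_{k} = a/2 - a = -a/2 \leq -1 < 0$. This establishes both forward implications simultaneously and, crucially, shows that $\Delta_{k} \neq 0$ in every case, so that the sign of $\Delta_{k}$ is always strictly positive or strictly negative.

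For the converse implications I would invoke the dichotomy that every positive integer is either odd or even, together with the mutual exclusivity just exhibited. To see that $\Delta_{k} > 0$ forces $C^{k}(n)$ to be odd: if instead $a$ were even, the second case would give $\Delta_{k} < 0$, contradicting $\Delta_{k} > 0$; hence $a$ must be odd. The symmetric contrapositive argument shows that $\Delta_{k} < 0$ forces $a$ even. Thus both biconditionals follow.

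There is no serious obstacle here; the statement is a direct case computation rather than a deep result. The only points requiring a little care are ensuring the inequalities are strict (which uses $a \geq 1$ so that $2a+1 > 0$ in the odd case, and $a \geq 2$ so that $-a/2 < 0$ in the even case) and noting explicitly that $\Delta_{k}$ never vanishes, since it is precisely this collapse of the sign trichotomy to a strict dichotomy that makes the converse directions immediate.
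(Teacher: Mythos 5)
Your proof is correct and takes essentially the same approach as the paper: a direct case computation showing $\Delta_{k}=2C^{k}(n)+1>0$ in the odd case and $\Delta_{k}=-C^{k}(n)/2<0$ in the even case. You are in fact slightly more complete than the paper, which stops after the two forward computations and leaves the converse directions (which follow from the exhaustiveness and mutual exclusivity of the two sign outcomes) implicit.
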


\begin{proof}
Let $n \in \mathbb{N}^{+}$; let $k \in \mathbb{N}^{+}$, and suppose $C^{k}(n) \equiv 1  \text{ (mod 2)}$. Then $C^{k+1}(n)= 3C^{k}(n)+1$; by direct calculation of $\Delta_{k}=(3C^{k}(n)+ 1) - C^{k}(n) = 2C^{k}(n) + 1$. Now, suppose $C^{k}(n)\equiv 0  \text{ (mod 2)}$, then $C^{k+1}(n) = \frac{C^{k}(n)}{2}$, by direct calculation of $\Delta_{k} = \frac{C^{k}(n)}{2}-C^{k}(n) = -\frac{C^{k}(n)} {2}$.
\end{proof}

\begin{corollary}\label{coro1A}
Let $C:\mathbb{N} \to \mathbb{N}$ be the Collatz function and then; $x = C(y)$ is solvable. To simplify the proof, it is divided into the following cases:
\begin{itemize}
\item[a.] If $y \equiv 1  \text{ (mod 2)}$ then $x = C(y)$ is solvable.
\item[b.] If $y \equiv 0  \text{ (mod 2)}$ then $x = C(y)$ is solvable.
\end{itemize}
\end{corollary}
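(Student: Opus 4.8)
The plan is to observe that, once the parity of $y$ is fixed, the relation $x=C(y)$ becomes an affine relation between $x$ and $y$ with integer coefficients, i.e. exactly a linear Diophantine equation of the form \eqref{ecua6}. Thus in each case I would first rewrite $x=C(y)$ in the standard form $ax+by=c$, read off $a,b,c$, compute $\alpha=\gcd(a,b)$, verify that $\alpha\mid c$, and then invoke Theorem \ref{teo2} to conclude that the equation is solvable (in fact with infinitely many integer solutions). Exhibiting a particular solution in each case then makes the general solution \eqref{ecua7} available, which is what the later corollaries \ref{coro2}--\ref{coro2A} build upon.

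For case (a), when $y\equiv 1\ \text{(mod 2)}$, the Collatz rule gives $C(y)=3y+1$, so $x=C(y)$ rewrites as $x-3y=1$. Here $a=1$, $b=-3$, $c=1$, and $\alpha=\gcd(1,-3)=1$ divides $c=1$, so Theorem \ref{teo2} applies. I would record the particular solution $(x,y)=(4,1)$, since $4-3\cdot 1=1$; this is precisely the pair singled out in the remark following Corollary \ref{coro2} and used throughout the convergence argument. For case (b), when $y\equiv 0\ \text{(mod 2)}$, the rule gives $C(y)=\tfrac{y}{2}$, so $x=C(y)$ rewrites as $2x-y=0$. Here $a=2$, $b=-1$, $c=0$, and $\alpha=\gcd(2,-1)=1$ divides $c=0$, so Theorem \ref{teo2} again yields solvability, with the obvious particular solution $(x,y)=(t,2t)$ for any integer $t$.

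I do not anticipate a genuine obstacle, since the whole content is a direct application of Theorem \ref{teo2}. The only point requiring care is the bookkeeping of which coefficient plays the role of $a$ and which of $b$ when casting $x=C(y)$ into the form $ax+by=c$, so that the divisibility test $\alpha\mid c$ is checked against the correct right-hand side. Because $\gcd=1$ in both cases, that divisibility condition holds automatically and solvability is immediate.
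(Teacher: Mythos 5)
Your proposal is correct and follows essentially the same route as the paper: in each parity case you rewrite $x=C(y)$ as a linear Diophantine equation ($x-3y=1$ and $2x-y=0$), check the $\gcd$ condition of Theorem \ref{teo2}, and exhibit a particular solution. The only cosmetic difference is that the paper takes $(s,t)=(x_{1},y_{0})$ from Algorithm \ref{alg1} as the particular solution in case (a), whereas you use $(4,1)$; both satisfy the equation and your explicit verification that $\gcd(a,b)$ divides $c$ is if anything slightly more careful than the paper's.
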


\begin{proof}
First let's show (a.) supposing that $y \equiv 1  \text{ (mod 2)}$, then; $x = C(y) = 3y + 1$, equivalent to the Diophantine equation $x - 3y = 1$ is solvable, since $a=1$, $b=-3$ and $c=1$, this fulfilling the conditions of theorem \ref{teo2}. Furthermore, by algorithm \ref{alg1} proposed, taking $s=x_{1}$ and $t=y_{0}$, it is a particular solution of $x - 3y = 1$, since $x_{1} = C(y_{0})$. Therefore, the general solution is $x = s -3\eta$ and $y = t -\eta$.

To prove (b.) suppose that $y \equiv 0  \text{ (mod 2)}$, then; $x = C(y) = \frac{y}{2}$, equivalent to the Diophantine equation $2x - y = 0$ is solvable, since $a=2$, $b=-1$ and $c=0$ meet the conditions of theorem\ref{teo2}.
\end{proof}

\begin{corollary}\label{coro2A}
Let $C:\mathbb{N} \to \mathbb{N}$ be the Collatz function; then, so that, for all $i \in \mathbb{N}$ then, $x_{i+1} = C(y_{i})$  algorithm \ref{alg1} is solvable, and the general solution is of the form:
 
\begin{equation}
\left\{
\begin{array}
[c]{l}%
x_{i+1} = x_{1} - 3 \eta_{i} \\
y_{i} = y_{0} - \eta_{i}.
\end{array}
\right. \label{ecua8A}%
\end{equation}
where $\eta_{i} \in \mathbb{Z}$.
  
In addition, from the equations \eqref{ecua8A} it follows that:
\begin{equation}
\eta_{i}=y_{0}-y_{i}\label{ecua9A}%
\end{equation}

\end{corollary}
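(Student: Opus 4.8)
The plan is to reduce the claim entirely to the Diophantine machinery of Theorem~\ref{teo2}, the key observation being that the quantity $y_{i}$ to which the Collatz map is applied at every step of algorithm~\ref{alg1} is always odd. Indeed, by Definition~\ref{def3} we have $y_{0} = \mathbb{O}(n)$ and $y_{i} = \mathbb{O}(x_{i})$ for $i \geq 1$, and $\mathbb{O}$ returns the product of the odd prime powers of its argument; hence $y_{i} \equiv 1 \text{ (mod 2)}$ for every $i \in \mathbb{N}$. Consequently the update $x_{i+1} = C(y_{i})$ is always the odd branch $x_{i+1} = 3y_{i} + 1$, i.e. the linear Diophantine equation $x_{i+1} - 3y_{i} = 1$, which is precisely case (a) of Corollary~\ref{coro1A}.

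First I would fix the coefficients $a = 1$, $b = -3$, $c = 1$ and note that $\alpha = \gcd(1,-3) = 1$ divides $c$, so Theorem~\ref{teo2} guarantees the equation is solvable with infinitely many integer solutions. Next I would exhibit a single particular solution that is valid for every index $i$: since $y_{0}$ is odd, $x_{1} = C(y_{0}) = 3y_{0} + 1$, so the pair $(s,t) = (x_{1}, y_{0})$ satisfies $x_{1} - 3y_{0} = 1$. Because the coefficients of the equation $x - 3y = 1$ do not depend on $i$, this same particular solution serves at every step.

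Then I would substitute $s = x_{1}$, $t = y_{0}$ and $\eta = \eta_{i}$ into the general-solution formula \eqref{ecua7} of Theorem~\ref{teo2}, obtaining $x_{i+1} = x_{1} + \eta_{i} \frac{b}{\alpha} = x_{1} - 3\eta_{i}$ and $y_{i} = y_{0} - \eta_{i} \frac{a}{\alpha} = y_{0} - \eta_{i}$, which is exactly the system \eqref{ecua8A}; since the actual pair $(x_{i+1}, y_{i})$ satisfies $x_{i+1} - 3y_{i} = 1$, it must be of this form for a unique $\eta_{i} \in \mathbb{Z}$. Finally, solving the second coordinate $y_{i} = y_{0} - \eta_{i}$ for $\eta_{i}$ immediately yields \eqref{ecua9A}, namely $\eta_{i} = y_{0} - y_{i}$.

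The step I expect to require the most care is not the algebra but the structural justification that the \emph{same} fixed equation $x - 3y = 1$, and hence the \emph{same} particular solution $(x_{1}, y_{0})$, governs every step; this rests on the invariant that $\mathbb{O}$ forces each $y_{i}$ to be odd, so that the odd branch of $C$ applies uniformly rather than the even branch $2x - y = 0$ of Corollary~\ref{coro1A}(b). Establishing this oddness invariant cleanly (supported by remark~\ref{remark2}, where $\mathbb{E}\circ\mathbb{O} = 1$ encodes that $\mathbb{O}(x)$ carries no factor of $2$) is the only genuinely conceptual point; everything else is a direct specialization of \eqref{ecua7}.
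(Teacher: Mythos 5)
Your proof is correct and follows essentially the same route as the paper: both reduce the claim to Corollary~\ref{coro1A} / Theorem~\ref{teo2}, take $(x_{1},y_{0})$ as the particular solution of $x-3y=1$, and read off the general solution \eqref{ecua8A}. Your version is in fact slightly tighter --- you make explicit the oddness invariant on every $y_{i}$ that justifies using the odd branch of $C$ uniformly, and you obtain \eqref{ecua9A} by solving $y_{i}=y_{0}-\eta_{i}$ directly rather than via the paper's detour through $4y_{i}+1=4y_{0}+1-4\eta_{i}$.
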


\begin{proof}

Equation\eqref{ecua8A} is a direct consequence of corollary \ref{coro1A}.
To show the equation \eqref{ecua9A}, the first equality of \eqref{ecua7} was taken and, applying the definition\eqref{ecua1} we get $3y_{i} + 1 = 3y_{0} + 1 - 3 \eta_{i}$. Then; adding this expression with the second expression of \eqref{ecua8A}, we get $4y_{i} + 1 = 4y_{ 0} + 1 - 4 \eta_{i}$, simplifying and solving for $\eta_{i}$, the expression \eqref{ecua9A} is derived.
\end{proof}

A fourth presentation of the Collatz sequences can be seen in the following figure \ref{fig4a}, the sequences converge to the axis parallel to the z axis, translated to the point (4,1,0). The x axis represents the sequences E $x_{k}$, the y axis represents the sequence E $y_{k}$ generated by the algorithm \ref{alg1}, on the z axis the sequence $C^{k}(n)$ for each natural number  $1 \leq n \leq 200$.
The numbers indicate the order $k$ in the sequence $(x_{k+1},y_{k},n)$.

\begin{figure}[b]
\centering
\caption{Collatz optimal algorithm behavior of $x$ vs $y$ for values between $1-200$}\label{fig4a}
\includegraphics[width=9cm,height=7cm]{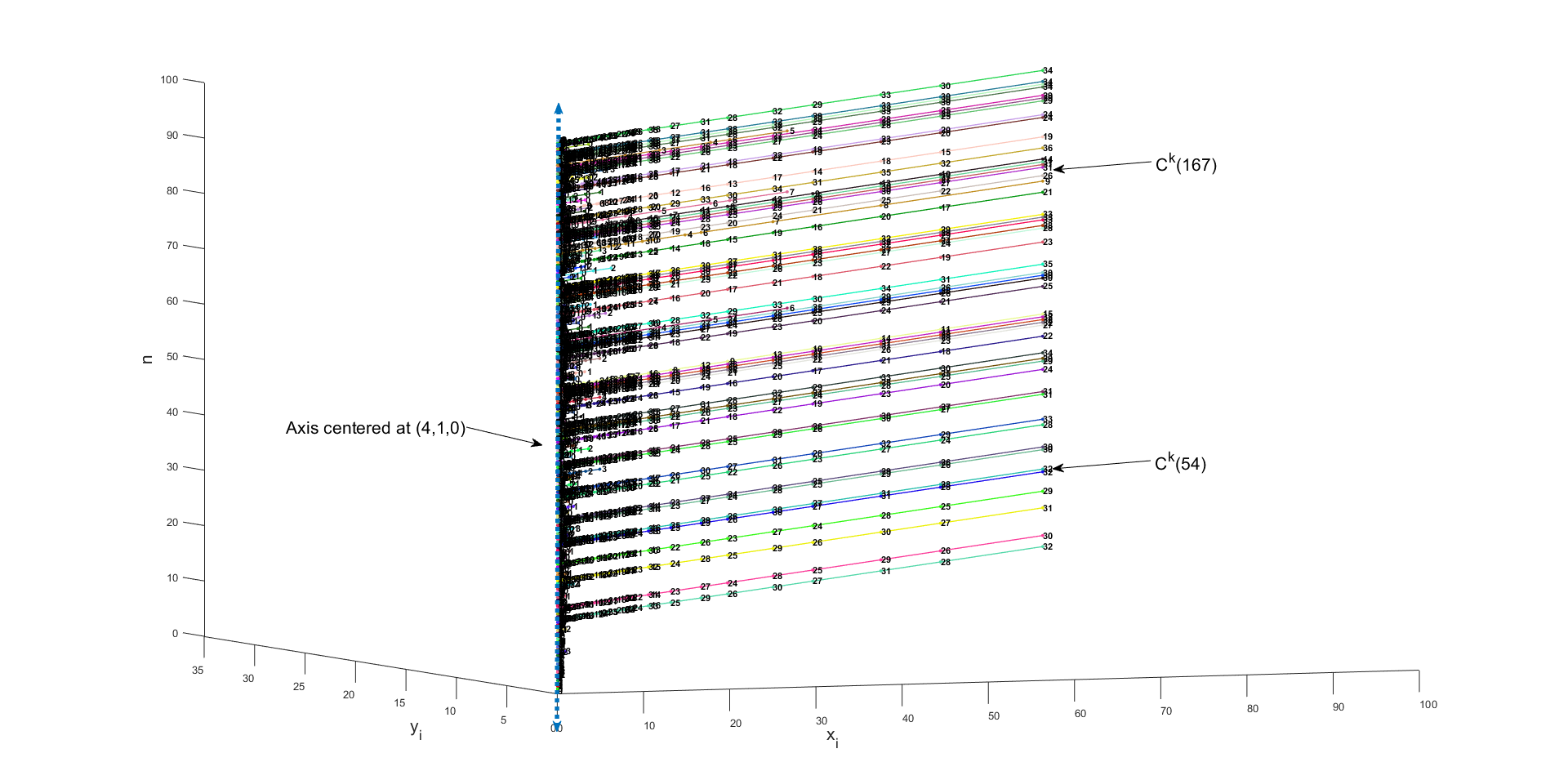}
\subcaption{3D representation}
\includegraphics[width=9cm,height=9cm]{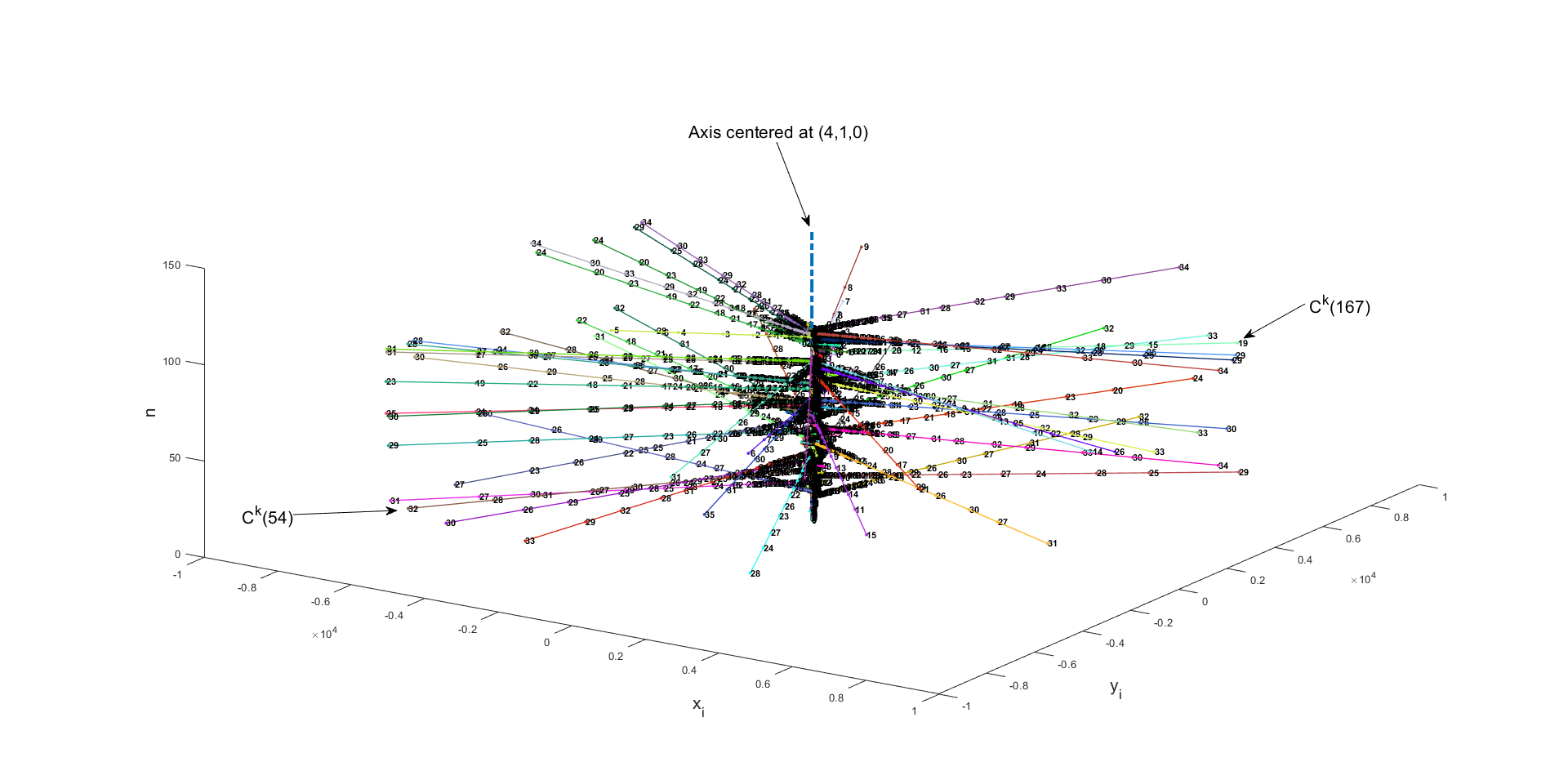}
\subcaption{The sequences were randomly rotated to improve the vision of the representation of figure (4a).}
\end{figure}

\bibliographystyle{amsplain}

\end{document}